\documentclass[a4paper,12pt]{amsart}

\usepackage{a4wide}
\usepackage{pgf,tikz}
\usepackage{amssymb}
\allowdisplaybreaks
\usepackage{enumerate}
	
\usepackage{amsmath}

\newif\ifdetails
\detailstrue
\newcommand{\DETAIL}[1]%
{\ifdetails\par\fbox{\begin{minipage}{0.9\linewidth}\textit{Detail:}
      #1\end{minipage}}\par\fi}
\newcommand{\TODO}[1]%
{\ifdetails\par\fbox{\begin{minipage}{0.9\linewidth}\textbf{TODO:}
      #1\end{minipage}}\par\fi}

\usepackage{makecell}
\usepackage{relsize}

\usepackage[pagewise,displaymath, mathlines]{lineno}
\newtheorem{thm}{Theorem}
\newtheorem{lem}{Lemma}

\newtheorem{cor}{Corollary}

\newtheorem{prop}{Proposition}

\usepackage{caption}
\usepackage{subcaption}
\usepackage{float} % provides H as float placement specifier
\usepackage[pdftex,a4paper,
citecolor = blue, colorlinks=true,urlcolor=blue]{hyperref}
\usepackage{mathrsfs}
\usetikzlibrary{arrows}

\usepackage{xcolor}

\newcommand{\old}[1]{{}}
\title[DOSSOU-OLORY, ANDRIANTIANA, RAKOTONARIVO, AND SHOZI]{Ratio of the number of $\mathbf{1}$-nearly independent vertex subsets and the Merrifield-Simmons index}

\author[Dossou-Olory]{Audace A. V. Dossou-Olory}
\author[Andriantiana]{Eric O. D.  Andriantiana}
\author[Rakotonarivo]{Valisoa R. M. Rakotonarivo}
\author[Shozi]{Zekhaya B. Shozi}

\thanks{}

\address{Audace A. V. Dossou-Olory \\  Institut National de l'Eau, Abomey-Calavi \\ and  Institut de Math\'ematiques et de Sciences Physiques, Dangbo \\ Universit\'e d'Abomey-Calavi, B\'enin }
\email{audace@aims.ac.za \quad or \quad audace.dossouolory@uac.bj}

\address{Eric O. D. Andriantiana \\ Department of Mathematics (Pure and Applied) \\ Rhodes University \\ Makhanda, 6140 South Africa \\ and National Institute for Theoretical and Computational Sciences (NITheCS), Stellenbosch, South Africa}
\email{e.andriantiana@ru.ac.za}

\address{Valisoa R. M. Rakotonarivo\\ Department of Mathematics and Applied Mathematics \\ University of Pretoria\\ Hatfield, 0002 South Africa\\ and National Institute for Theoretical and Computational Sciences (NITheCS), Stellenbosch, South Africa}
\email{valisoa.rakotonarivo@up.ac.za}

\address{Zekhaya B. Shozi \\ Discipline of Mathematics \\ University of KwaZulu-Natal \\ Durban, 4000 South Africa and National Institute for Theoretical and Computational Sciences (NITheCS), Stellenbosch, South Africa}
\email{shoziz1@ukzn.ac.za  \quad or \quad zekhaya@aims.ac.za}

\subjclass[2020]{Primary 05C30, 05C69; Secondary 05C35, 05C75}
\keywords{0-nearly independent vertex subsets, 0-nearly independent edge subsets, 1-nearly independent vertex subsets, 1-nearly independent edge subsets, extremal structures}

\begin{document}

\begin{abstract}
The number $\sigma_k(G)$ of induced subgraphs with size $k$ of a graph $G$ was introduced recently as the number of $k$-nearly independent vertex subsets of $G$. Results highlighting similarity and difference in the behaviours of $\sigma_1$ and $\sigma_0$, have been reported. In this paper, we provide more comparison tools, by studying the ratio $\frac{\sigma_1(G)}{\sigma_{0}(G)}$. We establish sharp lower and upper bounds for this ratio over various classes of graphs, including connected graphs, trees, and forests. 

\end{abstract}

\maketitle

\section{Introduction}
The study of independent sets and their generalisations is a popular topic in graph theory. It is motivated by connections to combinatorics, statistical physics, and chemical graph theory. Given a graph $G$, an \emph{independent vertex subset} is a set of vertices, no two of which are adjacent in $G$. The number of such sets in $G$, denoted by $\sigma_0(G)$, is often known as the Merrifield-Simmons index of $G$. It has been extensively studied since its introduction by Merrifield and Simmons~\cite{merrifield1981enumeration} in the context of molecular structure analysis. This invariant, together with related quantities such as the Hosoya index and graph energy, has generated a rich body of extremal and structural results; see the survey~\cite{wagner2010maxima}. 

A natural and recent extension of independent sets is the notion of \emph{$k$-nearly independent vertex subsets}, where a subset of vertices induces exactly $k$ edges. This generalisation provides more tools for the analysis of graph structure. The systematic study of $k$-nearly independent subsets was recently advanced (see~\cite{andriantiana2024number}) by two of the authors of this article, who introduced and investigated the parameter $\sigma_1(G)$, the number of $1$-nearly independent vertex subsets, establishing foundational recursive relations and extremal bounds. Further developments were made in~\cite{DossouOloryAndriantiana2025}, exploring the average size of such subsets and highlighting their combinatorial significance. This was followed by the Nordhaus-Gaddum-type results in \cite{andriantiana2026nordhaus}.

Beyond enumeration, comparative studies of different invariants have emerged as a powerful tool for understanding the interplay between different structural parameters. Classical examples include correlations (see~\cite{Wagner2007Correlation}) between parameters and a ratio~\cite{WienerRatio2023}. Such ratios often reveal more refined structural information than the single invariants themselves, and they have been used to characterise extremal graphs, compare graph classes, and study asymptotic behaviour. Motivated by these developments, we consider in this paper the ratio
\[
Q(G) = \frac{\sigma_1(G)}{\sigma_0(G)}\,.
\]
%This ratio, which we refer to as the $(1,0)$-ratio or simply the \emph{Stellenbosch ratio}, provides a natural measure of how the allowance of a single edge in a vertex subset increases the combinatorial richness of the graph. Understanding the extremal behaviour of this ratio offers new insights into the structure of graphs that maximise or minimise near-independence.

We establish sharp lower and upper bounds, as well as identifying extremal graphs under various structural constraints. Our approach combines recursive techniques, decomposition methods, and inequalities derived from classical results on $\sigma_0(G)$ and $\sigma_1(G)$.

The paper is organised as follows. In Section~\ref{Sec:PRE}, we start our preparation with preliminary results, including key recursive relations for $\sigma_0(G)$ and $\sigma_1(G)$, as well as known extremal properties that will be used throughout the paper. In Section~\ref{Sec:LOW}, we provide lower bounds for the ratio $Q(G)$, first for connected graphs and then for more general graph classes, with particular attention to the role of maximum degree and graph decomposition. Section~\ref{Sec:UPP} is devoted to upper bounds, where we derive sharp inequalities for forests and trees using inductive techniques and refined structural lemmas. These results are further improved through alternative formulations that yield tighter bounds.

\section{Preliminary Results}
\label{Sec:PRE}
In this section, we present some preliminary results that will be helpful in proving our main results. 

Let $G=(V(G), E(G))$ be a graph, $v$ a vertex of $G$, and $S$ a set of vertices or edges of $G$. We denote by $G-S$ the subgraph obtained by removing from $G$ all elements of $S$ along with their incident edges.  We simply write $G-v$ instead of $G-\{v\}$. %Note that $G-S_1-S_2=G-(S_1 \cup S_2)$. 

We represent by $N_G(v)=\{u: uv\in E(G)\}$ the (open) neighbourhood of $v$ in $G$ and by $N_G[v]=N_G(v)\cup\{v\}$ the closed neighbourhood of $v$ in $G$. As usual, the star and the path with $n$ vertices are denoted by $S_n$ and $P_n$ respectively. For two vertex disjoint graphs $G_1$ and $G_2$, we denote by $G_1 \cup G_2$ the graph with the vertex set $V(G_1) \cup V(G_2)$ and edge set $E(G_1) \cup E(G_2)$, while we simply write $m G$ instead of $G \cup G \cup \cdots \cup G$ ($m$ copies of $G$). By $\overline{G}$, we mean the complement of $G$, i.e. the graph with vertex and edge sets $V(G)$ and $\{uv:~ uv\notin E(G)\}$, respectively. We write $\overline{K_n}$ for $nP_1$. The number of vertices (resp. edges) of $G$ is referred to as the order (resp. size) of $G$.

\medskip

The following well-known recursive relation will be used often.
\begin{lem}[\cite{andriantiana2024number, merrifield1981enumeration}]
\label{Lem:Rec}
For any vertex $v$ in a graph $G$ we have
$$
\sigma_0(G)= \sigma_0(G-v)+\sigma_0(G-N_G[v])\,,$$
and 
$$
\sigma_1(G)= \sigma_1(G-v)+\sigma_1(G-N_G[v])+\sum_{u\in N_G(v)} \sigma_0(G-N_G[v]-N_G[u]).
$$
\end{lem}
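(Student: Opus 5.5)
The plan is a direct bijective partition of the relevant families of vertex subsets according to how they meet the closed neighbourhood of the fixed vertex $v$. Throughout, $\sigma_k(H)$ counts the subsets $S\subseteq V(H)$ such that the induced subgraph $H[S]$ has exactly $k$ edges. The empty set is included, so that $\sigma_0$ of the empty graph is $1$.

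\textbf{The recursion for $\sigma_0$.} Let $S$ be an independent set of $G$. If $v\notin S$, then $S$ is an independent set of $G-v$, and conversely every independent set of $G-v$ is independent in $G$. If $v\in S$, then $S$ contains no neighbour of $v$. Hence $S\setminus\{v\}$ is an independent set of $G-N_G[v]$. Conversely, adding $v$ to any independent set of $G-N_G[v]$ gives an independent set of $G$. Since these two cases are disjoint and exhaustive, the first identity follows.

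\textbf{The recursion for $\sigma_1$: three classes.} Let $S$ be a $1$-nearly independent subset of $G$, so $G[S]$ has exactly one edge. Split into three classes.
\begin{enumerate}[(i)]
\item $v\notin S$. These sets are exactly the $1$-nearly independent subsets of $G-v$, since removing $v$ does not change the subgraph induced by $S$. They contribute $\sigma_1(G-v)$.
\item $v\in S$ and $S\cap N_G(v)=\emptyset$. Here $v$ is isolated in $G[S]$, so the unique edge lies in $S\setminus\{v\}\subseteq V(G-N_G[v])$. The map $S\mapsto S\setminus\{v\}$ is then a bijection onto the $1$-nearly independent subsets of $G-N_G[v]$. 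This class contributes $\sigma_1(G-N_G[v])$.
\item $v\in S$ and $S\cap N_G(v)\neq\emptyset$. Any $u\in S\cap N_G(v)$ gives the edge $uv$ in $G[S]$. Since there is only one edge, $u$ is unique and $uv$ is the sole edge. Hence $T=S\setminus\{u,v\}$ has three properties: it contains no neighbour of $v$ other than $u$, it contains no neighbour of $u$, and it is independent. So $T$ is an independent set of $G-N_G[v]-N_G[u]$.

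Conversely, take $u\in N_G(v)$ and any independent set $T$ of $G-N_G[v]-N_G[u]$. Then $T\cup\{u,v\}$ induces exactly the single edge $uv$. The pair $(u,T)$ is recovered from $S$ because $u$ is the unique neighbour of $v$ in $S$. Grouping by $u$, this class contributes $\sum_{u\in N_G(v)}\sigma_0(G-N_G[v]-N_G[u])$.
\end{enumerate}
The three classes are disjoint and cover all $1$-nearly independent subsets, so summing their contributions gives the second identity.

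\textbf{Main point of care.} The only delicate step is the uniqueness of $u$ in case (iii). It guarantees that each set is counted exactly once in the sum over $u$ and that the correspondence with pairs $(u,T)$ is a bijection. This follows directly from $G[S]$ having exactly one edge.
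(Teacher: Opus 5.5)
Your proof is correct and complete. Splitting the sets according to whether $v\in S$, and if so whether (and which single) neighbour $u$ of $v$ lies in $S$, is the standard argument behind these recursions, and you handle the one delicate point (uniqueness of $u$, which gives the bijection with pairs $(u,T)$) properly. The paper does not prove this lemma itself but quotes it from the literature, so there is no alternative route to compare against.
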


\medskip
The next two results stated here as lemmas can be found in~\cite{andriantiana2024number,wagner2010maxima}.

\begin{lem}[\cite{andriantiana2024number}]
    \label{lem:lower-bound-for-sigma-1-of-connected-graphs}
     If $G$ is a connected graph of order $n$, then
     \begin{align*}
         \sigma_1(G) \ge \sigma_1(S_n) = n-1,
     \end{align*}
     with equality if and only if $G = S_n$.
\end{lem}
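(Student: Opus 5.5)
We need to prove that every connected graph $G$ of order $n$ satisfies $\sigma_1(G)\ge n-1$, with equality only for the star $S_n$.

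\textbf{Counting all $1$-nearly independent sets.} Note first that $\sigma_1(S_n)=n-1$. Indeed, a vertex subset of a star induces exactly one edge only when it contains the centre together with exactly one leaf. For a general $G$, each edge $uv$ on its own gives the $1$-nearly independent set $\{u,v\}$, and distinct edges give distinct sets. Hence
$$\sigma_1(G)\ge |E(G)|\ge n-1,$$
where the last inequality holds because $G$ is connected.

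\textbf{Equality.} Suppose $\sigma_1(G)=n-1$. Then both inequalities above must be tight. So $G$ is a tree, and every $1$-nearly independent set is a single edge.

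Assume $G$ is not a star. A tree that is not a star has diameter at least $3$, so it contains a path $a\,b\,c\,d$. Since $G$ is a tree, $a$ is not adjacent to $d$ and $c$ is not adjacent to $a$. Consider the set $\{a,b,d\}$:
\begin{itemize}
\item its only induced edge is $ab$, because $bd$ would close a triangle $b\,c\,d$ and $ad$ would close a $4$-cycle;
\item it therefore induces exactly one edge but has three vertices, so it is a $1$-nearly independent set that is not an edge.
\end{itemize}
Then $\sigma_1(G)\ge n$, a contradiction. Therefore $G=S_n$.

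\textbf{Where the difficulty lies.} The only delicate step is showing that a non-star tree contains a $1$-nearly independent set other than an edge. The diameter-$3$ path handles this directly. No recursion from Lemma~\ref{Lem:Rec} is needed.
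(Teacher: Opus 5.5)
Your proof is correct. Every edge is itself a $1$-nearly independent set, so $\sigma_1(G)\ge |E(G)|\ge n-1$. Equality forces $G$ to be a tree whose only such sets are its edges. In a non-star tree, a path $a\,b\,c\,d$ gives the extra set $\{a,b,d\}$, which induces only the edge $ab$.

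The paper does not prove this lemma; it imports it from \cite{andriantiana2024number}, so there is no in-paper argument to compare against. Your edge-counting argument is a self-contained, elementary proof. It also directly gives the form used later in the proof of Theorem~\ref{thm:lower-bound-for-sigma-ratio-of-connected-graphs}, namely $\sigma_1(G)\ge n$ for every connected $G\neq S_n$: either $|E(G)|\ge n$, or $G$ is a non-star tree and picks up the extra $3$-set.

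One cosmetic point: the remark that $c$ is not adjacent to $a$ plays no role, since $c\notin\{a,b,d\}$.
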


\begin{lem}[\cite{wagner2010maxima}]
    \label{lem:upper-bound-for-sigma-0-of-connected-graphs}
     If $G$ is a connected graph of order $n$, then
     \begin{align*}
         \sigma_0(G) \le \sigma_0(S_n) = 2^{n-1}+1,
     \end{align*}
     with equality if and only if $G = S_n$.
\end{lem}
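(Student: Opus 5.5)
The statement is the classical result that among connected graphs of order $n$, $\sigma_0(G)\le 2^{n-1}+1$, with equality only for the star. The plan is to use the vertex recursion of Lemma~\ref{Lem:Rec} together with a spanning-tree reduction.

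First, the reduction to trees. Since $G$ is connected, it has a spanning tree $T$. Every independent set of $G$ is independent in $T$, so $\sigma_0(G)\le\sigma_0(T)$. If $G$ has an edge $e$ outside $T$, then the two endpoints of $e$ are independent in $T$ but not in $G$, so the inequality is strict. Hence it suffices to prove the bound for trees, and to show that equality among trees forces $T=S_n$.

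Second, the tree case. Pick a vertex $v$ of maximum degree $d\ge 1$ in $T$ (the case $n=1$ is trivial, with $\sigma_0=2=2^0+1$). By Lemma~\ref{Lem:Rec},
\[
\sigma_0(T)=\sigma_0(T-v)+\sigma_0(T-N_T[v]).
\]
Any graph on $m$ vertices has $\sigma_0\le 2^m$, with equality only for the empty graph $\overline{K_m}$. This gives $\sigma_0(T-v)\le 2^{n-1}$ and $\sigma_0(T-N_T[v])\le 2^{n-1-d}$. Therefore
\[
\sigma_0(T)\le 2^{n-1}+2^{n-1-d}.
\]
If $d=n-1$, then $T=S_n$ and the value is exactly $2^{n-1}+1$.

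If $d<n-1$, then $T-v$ contains an edge, because $T$ is connected and some vertex lies outside $N_T[v]$. So the first term is strictly smaller than $2^{n-1}$. We need the sharper estimate
\[
\sigma_0(T-v)+\sigma_0(T-N_T[v])\le 2^{n-1}.
\]
For this, count the independent sets of $T-v$ more carefully. There is a neighbour $u$ of $v$ that has a neighbour $w\ne v$. Every subset of $V(T)\setminus\{v\}$ containing both $u$ and $w$ is dependent, and there are $2^{n-3}$ such subsets. Hence $\sigma_0(T-v)\le 2^{n-1}-2^{n-3}$. Combined with $\sigma_0(T-N_T[v])\le 2^{n-1-d}\le 2^{n-3}$ (since $d\ge 2$ when $T$ is not a path of length at most $2$ and not a star), this gives $\sigma_0(T)\le 2^{n-1}$, which is strictly less than $2^{n-1}+1$.

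The small cases $d=1$ must be checked separately. These are $T=P_2$, which is itself a star, so there is nothing to check beyond the star case.

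The main obstacle is exactly this non-star case: the crude bound $2^{n-1}+2^{n-1-d}$ is too weak on its own, and we must exploit a single forbidden pair in $T-v$. That pair is what absorbs the second term.
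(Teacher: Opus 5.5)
Your proof is correct. The paper gives no proof of this lemma; it is quoted from the Wagner--Gutman survey, so there is no in-paper argument to compare against, and yours is a complete, self-contained proof.

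The spanning-tree step correctly handles non-trees, with strictness coming from an edge outside the tree. In the tree case, a single use of Lemma~\ref{Lem:Rec} at a vertex of maximum degree, together with the one forbidden pair $\{u,w\}$ in $T-v$, gives $\sigma_0(T)\le (2^{n-1}-2^{n-3})+2^{n-3}=2^{n-1}$ for every non-star tree. So the equality case follows immediately.

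A common alternative is induction on $n$ through a leaf $v$ with neighbour $u$. One uses $\sigma_0(T)=\sigma_0(T-v)+\sigma_0(T-v-u)\le (2^{n-2}+1)+2^{n-2}$ and then traces when both estimates are tight. Your direct count avoids both the induction and that equality bookkeeping.

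Two minor remarks:
\begin{itemize}
\item The justification of $d\ge 2$ is oddly phrased. It is simpler to say that every tree on $n\ge 3$ vertices has maximum degree at least $2$, and a non-star tree has $n\ge 4$.
\item The spanning-tree reduction is actually dispensable, since the same count works for any connected $G$. If $d<n-1$, the pair $u,w$ still exists. If $d=n-1$ but $G\neq S_n$, then $G-v$ contains an edge, so $\sigma_0(G)=\sigma_0(G-v)+1\le 2^{n-1}-2^{n-3}+1<2^{n-1}+1$.
\end{itemize}
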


\begin{lem}[\cite{wagner2010maxima}]
     \label{lem:sigma_0-and-Z_0}
     If $G=\displaystyle \bigcup\limits_{i=1}^r G_i$, then 
     \begin{align*}
         \sigma_0(G) = \sigma_0\left(\bigcup\limits_{i=1}^r G_i\right) = \prod\limits_{i=1}^r \sigma_0(G_i). %\quad \text{ and } \quad Z_0(G) = Z_0\left(\bigcup\limits_{i=1}^r G_i\right) = \prod\limits_{i=1}^r Z_0(G_i).
     \end{align*}
 \end{lem}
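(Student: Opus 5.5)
The multiplicativity of $\sigma_0$ over disjoint unions follows from a bijection. Let $G=G_1\cup\cdots\cup G_r$, where the $G_i$ are pairwise vertex disjoint. Then $V(G)$ is the disjoint union of the $V(G_i)$, and every edge of $G$ lies inside a single $G_i$.

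The map
\[
S\longmapsto (S\cap V(G_1),\dots,S\cap V(G_r))
\]
sends subsets of $V(G)$ to $r$-tuples $(S_1,\dots,S_r)$ with $S_i\subseteq V(G_i)$. It is a bijection, and its inverse is $(S_1,\dots,S_r)\mapsto S_1\cup\cdots\cup S_r$.

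The key step is that this bijection preserves independence in both directions. Suppose $S$ is independent in $G$. Each $S\cap V(G_i)$ is a subset of $S$, so it contains no edge of $G$. In particular it contains no edge of $G_i$, since $E(G_i)\subseteq E(G)$.

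Conversely, suppose each $S_i$ is independent in $G_i$, and assume two vertices $x,y\in S_1\cup\cdots\cup S_r$ are adjacent in $G$. Since $E(G)=\bigcup_i E(G_i)$, the edge $xy$ lies in some $E(G_j)$, so both $x$ and $y$ belong to $V(G_j)$. Because the vertex sets are pairwise disjoint, this forces $x,y\in S_j$. That contradicts the independence of $S_j$ in $G_j$.

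It follows that independent sets of $G$ correspond exactly to $r$-tuples of independent sets of the components. Counting both sides gives
\[
\sigma_0(G)=\prod_{i=1}^r\sigma_0(G_i).
\]
This argument also covers the empty set: it corresponds to the tuple of empty sets, and each empty set is counted in its $\sigma_0(G_i)$.

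An alternative is induction on $r$ using Lemma~\ref{Lem:Rec}, but the direct bijection is cleaner. There is no real obstacle. The only point needing care is that no edge of $G$ joins two different $G_i$, which holds by the definition of $\cup$ for vertex-disjoint graphs.
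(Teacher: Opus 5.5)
Your bijection argument is correct and complete. The map $S\mapsto (S\cap V(G_1),\dots,S\cap V(G_r))$ matches independent sets of $G$ with tuples of independent sets of the $G_i$, exactly because no edge of $G$ joins two different vertex-disjoint parts. The paper gives no proof of its own here; it only cites the survey, and your argument is the standard one behind that fact, so nothing further is needed.
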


 \begin{lem}[\cite{andriantiana2024number}]
     \label{lem:sigma_1-and-Z_1}
     If $G = G_1 \cup G_2$, then 
     \begin{align*}
         \sigma_1(G) = \sigma_1(G_1)\sigma_0(G_2) + \sigma_1(G_2)\sigma_0(G_1).
     \end{align*}
 \end{lem}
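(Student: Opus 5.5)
The plan is to prove the identity by splitting the 1-nearly independent subsets of $G=G_1\cup G_2$ according to the component that contains the induced edge. Since $G_1$ and $G_2$ are vertex disjoint and no edge of $G$ joins them, every subset $S\subseteq V(G)$ decomposes uniquely as $S=S_1\cup S_2$ with $S_i=S\cap V(G_i)$. Moreover, the edges induced by $S$ in $G$ are exactly the edges induced by $S_1$ in $G_1$ together with those induced by $S_2$ in $G_2$. Hence the number of edges induced by $S$ is the sum of the numbers induced by $S_1$ and $S_2$.

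Next, $S$ induces exactly one edge precisely when one of $S_1,S_2$ induces exactly one edge and the other induces none. These two cases are disjoint, because the single edge lies in exactly one of the components.

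The pairs $(S_1,S_2)$ in which $S_1$ induces one edge and $S_2$ is independent correspond bijectively to the product of the family counted by $\sigma_1(G_1)$ with the family counted by $\sigma_0(G_2)$. This gives $\sigma_1(G_1)\sigma_0(G_2)$ subsets. By symmetry the other case gives $\sigma_1(G_2)\sigma_0(G_1)$ subsets, and adding the two counts yields the formula. This is the same product-decomposition argument that underlies Lemma~\ref{lem:sigma_0-and-Z_0}. There is no real obstacle here. The only point needing care is the convention that the empty set is independent, so that $\sigma_0(G_i)\ge 1$. This matters because it covers the case where $S_2=\emptyset$ and the edge lies in $G_1$.
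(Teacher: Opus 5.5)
Your proof is correct. The paper gives no proof of its own for this lemma (it is quoted from \cite{andriantiana2024number}), and your argument is the standard direct count behind it: split $S$ into $S\cap V(G_1)$ and $S\cap V(G_2)$, note that the induced edge counts add, and count the two disjoint cases, exactly parallel to the product formula of Lemma~\ref{lem:sigma_0-and-Z_0}.
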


%\subsection{Relationships between ratios}

%In this subsection, we present a relationship between the $(1,0)$-ratio for $\overline{\sigma}_1(G)$ and the $(1,0)$-ratio for $\sigma_1(G)$ as well as a relationship between the $(1,0)$-ratio for $\overline{Z}_1(G)$ and the $(1,0)$-ratio for $Z_1(G)$ if $G$ is any graph. Note that for any graph $G$, we have $\overline{\sigma}_0(G)= \sigma_0(G)$ and $\overline{Z}_0(G) = Z_0(G)$.

% \begin{lem}
%     \label{lem:ratio-for-cum-sigma-1}
%     For any graph $G$, we have
%     \begin{itemize}
%         \item[(i)] ${\displaystyle \frac{\overline{\sigma}_{1}(G)}{\overline{\sigma}_{0}(G)} = 1 + \frac{\sigma_1(G)}{\sigma_0(G)},}$
%         \item[(ii)] ${\displaystyle \frac{\overline{Z}_{1}(G)}{\overline{Z}_{0}(G)} = 1 + \frac{Z_1(G)}{Z_0(G)}.}$
%     \end{itemize}
% \end{lem}

% \begin{proof}
%     Let $G$ be a graph. Then,
%     \begin{align*}
%         \frac{\overline{\sigma}_{1}(G)}{\overline{\sigma}_{0}(G)} = \frac{\sigma_0(G) + \sigma_1(G)}{\sigma_0(G)} = \frac{\sigma_0(G)}{\sigma_0(G)} + \frac{\sigma_1(G)}{\sigma_0(G)} = 1 + \frac{\sigma_1(G)}{\sigma_0(G)}, 
%     \end{align*}
%     as claimed in (i). Similarly,
%     \begin{align*}
%         \frac{\overline{Z}_{1}(G)}{\overline{Z}_{0}(G)} = \frac{Z_0(G) + Z_1(G)}{Z_0(G)} = \frac{Z_0(G)}{Z_0(G)} + \frac{Z_1(G)}{Z_0(G)} = 1 + \frac{Z_1(G)}{Z_0(G)}, 
%     \end{align*}
%     as claimed in (ii).
% \end{proof}

\medskip
Lemmas~\ref{lem:sigma_0-and-Z_0} and~\ref{lem:sigma_1-and-Z_1} lead to the following lemma on the ratio $\sigma_1/\sigma_0$. 

\begin{lem}[\cite{DossouOloryAndriantiana2025}]
\label{Lem:AVSum}
Let $G_1$ and $G_2$ be two vertex disjoint graphs. We have
\begin{align*}
\frac{ \sigma_1(G_1 \cup G_2)}{ \sigma_0(G_1 \cup G_2)}=\frac{ \sigma_1(G_1)}{ \sigma_0(G_1)}+\frac{ \sigma_1(G_2)}{ \sigma_0(G_2)}\,.
\end{align*}
\end{lem}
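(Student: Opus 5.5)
This follows directly from Lemmas~\ref{lem:sigma_0-and-Z_0} and~\ref{lem:sigma_1-and-Z_1}. Write $G = G_1 \cup G_2$, where $G_1$ and $G_2$ are vertex disjoint. By Lemma~\ref{lem:sigma_0-and-Z_0} with $r=2$, we have $\sigma_0(G_1\cup G_2)=\sigma_0(G_1)\sigma_0(G_2)$. By Lemma~\ref{lem:sigma_1-and-Z_1}, we have $\sigma_1(G_1\cup G_2)=\sigma_1(G_1)\sigma_0(G_2)+\sigma_1(G_2)\sigma_0(G_1)$. Dividing the second identity by the first and cancelling the common factor in each term gives
\begin{align*}
\frac{\sigma_1(G_1\cup G_2)}{\sigma_0(G_1\cup G_2)}
=\frac{\sigma_1(G_1)\sigma_0(G_2)}{\sigma_0(G_1)\sigma_0(G_2)}+\frac{\sigma_1(G_2)\sigma_0(G_1)}{\sigma_0(G_1)\sigma_0(G_2)}
=\frac{\sigma_1(G_1)}{\sigma_0(G_1)}+\frac{\sigma_1(G_2)}{\sigma_0(G_2)}\,,
\end{align*}
which is the claim.

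The only point that needs checking is that the divisions are legitimate, i.e.\ that the denominators are nonzero. This holds because the empty set is always independent, so $\sigma_0(H)\ge 1$ for every graph $H$, including the empty graph.

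If one preferred not to rely on Lemma~\ref{lem:sigma_1-and-Z_1}, it can be verified directly by a bijection. A vertex subset $S\subseteq V(G_1)\cup V(G_2)$ induces exactly one edge if and only if one of $S\cap V(G_1)$ and $S\cap V(G_2)$ induces exactly one edge and the other is independent, since no edges join $G_1$ and $G_2$. These two cases are disjoint and each is counted by a product, which gives the formula for $\sigma_1(G_1\cup G_2)$.

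There is no real obstacle here; the statement is a one-line consequence of the two product formulas. By induction on the number of components, the same argument also extends to unions of any finite number of pairwise vertex disjoint graphs.
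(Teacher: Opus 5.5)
Your proof is correct and follows the paper's route: the paper notes only that the lemma follows from the product formulas in Lemmas~\ref{lem:sigma_0-and-Z_0} and~\ref{lem:sigma_1-and-Z_1}, which is exactly the division you carry out. Your check that the denominators are nonzero ($\sigma_0(H)\ge 1$ because the empty set is independent) is a detail the paper leaves implicit.
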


\medskip
In the next section, we present several lower bounds for $Q$, beginning with straightforward ones and progressing to more interesting ones.

\section{Lower bounds}
\label{Sec:LOW}

The following theorem is trivial, we skip the proof.
\begin{thm}
    \label{thm:lower-bound-for-sigma-ratio-general-graphs}
    If $G$ is a general graph of order $n$, then 
    \begin{align*}
        \frac{\sigma_1(G)}{\sigma_0(G)} \ge \frac{\sigma_1(\overline{K_n})}{\sigma_0(\overline{K_n})} = \frac{0}{2^n} =0\,,
    \end{align*}
    with equality if and only if $G = \overline{K_n}$.
\end{thm}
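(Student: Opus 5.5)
The plan is to prove the inequality, the value for $\overline{K_n}$, and the equality case directly from the definitions. No recursion is needed.

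First, both quantities are counts. The empty set induces no edges, so it is an independent vertex subset of every graph. Hence $\sigma_0(G)\ge 1>0$, and the ratio $\sigma_1(G)/\sigma_0(G)$ is well defined. Since $\sigma_1(G)\ge 0$, the ratio is nonnegative. This gives the inequality.

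Next, for $\overline{K_n}$ there are no edges at all. So every one of the $2^n$ vertex subsets is independent, giving $\sigma_0(\overline{K_n})=2^n$. No subset can induce exactly one edge, so $\sigma_1(\overline{K_n})=0$ and the ratio is $0$. Alternatively, $\overline{K_n}=nP_1$ with $\sigma_1(P_1)/\sigma_0(P_1)=0/2=0$, and Lemma~\ref{Lem:AVSum} gives the sum $0$.

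For the equality case, suppose $G\neq\overline{K_n}$. Then $G$ has an edge $uv$, and the set $\{u,v\}$ induces exactly that one edge. Therefore $\sigma_1(G)\ge 1$ and the ratio is strictly positive. Conversely, equality clearly holds for $\overline{K_n}$, by the computation above.

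There is no real obstacle. The only point needing care is that the denominator never vanishes, and this follows from the empty set being independent.
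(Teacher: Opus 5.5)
Your proof is correct and is the direct verification the paper has in mind; the paper calls this theorem trivial and omits its proof. Your three observations settle both the bound and the equality case: $\sigma_0(G)\ge 1$ because the empty set is independent, $\sigma_0(\overline{K_n})=2^n$ with $\sigma_1(\overline{K_n})=0$, and any edge $uv$ makes $\{u,v\}$ a $1$-nearly independent set, so $\sigma_1(G)\ge 1$.
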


\medskip
One might think from Theorem~\ref{thm:lower-bound-for-sigma-ratio-general-graphs} that removing an edge from a graph 
G can possibly reduce the ratio $\sigma_1/\sigma_0$, however this is not always the case. In fact if we consider the path $P_4$ on $4$ vertices, and $uv$ its middle edge, then $P_4-uv=2K_2$ and
$$
\frac{\sigma_1(P_4)}{\sigma_0(P_4)}
=\frac{5}{8}<\frac{6}{9}=\frac{\sigma_1(2K_2)}{\sigma_0(2K_2)}\,.
$$

\medskip
We now aim to determine the connected graph of order $n$ that has the smallest $Q$.%=\sigma_1/\sigma_0$.
\begin{thm}
    \label{thm:lower-bound-for-sigma-ratio-of-connected-graphs}
    If $G$ is a connected graph of order $n$, then 
    \begin{align*}
        \frac{\sigma_1(G)}{\sigma_0(G)} \ge \frac{\sigma_1(S_n)}{\sigma_0(S_n)} = \frac{n-1}{2^{n-1}+1}\,,
    \end{align*}
    with equality if and only if $G =S_n$.
\end{thm}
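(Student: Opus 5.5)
The plan is to combine the two extremal results already recorded in Section~\ref{Sec:PRE}. The ratio is increasing in its numerator and decreasing in its denominator, so a connected graph that simultaneously has the smallest $\sigma_1$ and the largest $\sigma_0$ should minimise $Q$. The star does both.

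First, for a connected graph $G$ of order $n$, Lemma~\ref{lem:lower-bound-for-sigma-1-of-connected-graphs} gives
\[
\sigma_1(G)\ge n-1 .
\]
Second, Lemma~\ref{lem:upper-bound-for-sigma-0-of-connected-graphs} gives
\[
\sigma_0(G)\le 2^{n-1}+1 .
\]
Since $\sigma_0(G)\ge 1$ (the empty set is independent), all quantities are positive and we may divide:
\[
\frac{\sigma_1(G)}{\sigma_0(G)}\ \ge\ \frac{n-1}{\sigma_0(G)}\ \ge\ \frac{n-1}{2^{n-1}+1}.
\]
The values $\sigma_1(S_n)=n-1$ and $\sigma_0(S_n)=2^{n-1}+1$ are part of those lemmas. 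They can also be checked directly: the $1$-nearly independent sets of $S_n$ are the center together with exactly one leaf, and the independent sets are either any subset of the leaves or the center alone.

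For the equality case, suppose $G\neq S_n$. Then Lemma~\ref{lem:upper-bound-for-sigma-0-of-connected-graphs} gives the strict inequality $\sigma_0(G)<2^{n-1}+1$. When $n\ge 2$ we have $n-1>0$, so the second inequality in the chain becomes strict. When $n=1$, the only connected graph is $K_1=S_1$, so there is nothing to prove. Hence equality holds only for $S_n$, and the star clearly attains it.

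I expect no real obstacle here. The only point needing care is the equality case: strictness must come from a factor that is nonzero, so the case $n=1$ (where $n-1=0$) has to be treated separately as above.
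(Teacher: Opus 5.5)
Your proof is correct and follows the paper's approach: combine $\sigma_1(G)\ge n-1$ with $\sigma_0(G)\le 2^{n-1}+1$. The only difference is in the equality case. The paper uses integrality to sharpen both bounds to $\sigma_1(G)\ge n$ and $\sigma_0(G)\le 2^{n-1}$ and then compares $n/2^{n-1}$ with $(n-1)/(2^{n-1}+1)$, whereas you use only the strict bound $\sigma_0(G)<2^{n-1}+1$ together with $n-1>0$, which is slightly more direct.
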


\begin{proof}
    Let $G$ be any connected graph of order $n$. By Lemma~\ref{lem:lower-bound-for-sigma-1-of-connected-graphs} we have $\sigma_1(G) \ge \sigma_1(S_n)$, and by Lemma~\ref{lem:upper-bound-for-sigma-0-of-connected-graphs} we have $\sigma_0(S_n) \ge \sigma_0(G)$. Thus,
    \begin{align*}
        \frac{\sigma_1(G)}{\sigma_0(G)} \ge \frac{\sigma_1(S_n)}{\sigma_0(S_n)}=\frac{n-1}{2^{n-1}+1}\,.
    \end{align*}
To prove the characterisation of the equality case, suppose that $G$ is not %isomorphic to 
$S_n$ and that $G$ satisfies 
    \begin{align*}
        \frac{\sigma_1(G)}{\sigma_0(G)} = \frac{n-1}{2^{n-1}+1}.
    \end{align*}
    By Lemma~\ref{lem:lower-bound-for-sigma-1-of-connected-graphs}, $\sigma_1(G) \ge n$ and by Lemma~\ref{lem:upper-bound-for-sigma-0-of-connected-graphs}, $2^{n-1}\geq \sigma_0(G)$. Thus,
    \begin{align*}
     \frac{\sigma_1(G)}{\sigma_0(G)} \ge \frac{n}{2^{n-1}}\,.
    \end{align*}
    However,
    \begin{align*}
        \frac{n}{2^{n-1}} - \frac{n-1}{2^{n-1}+1} &= \left( \frac{n}{2^{n-1}}  +   \frac{1}{2^{n-1}+1}    \right) -  \frac{n}{2^{n-1}+1}\\
        &>\left( \frac{n}{2^{n-1}+1}  +   \frac{1}{2^{n-1}+1}    \right) -  \frac{n}{2^{n-1}+1}\\
        &>0,
    \end{align*}
    a contradiction. This completes the proof of Theorem~\ref{thm:lower-bound-for-sigma-ratio-of-connected-graphs}.
\end{proof}

\medskip
Since $S_n$ is a tree, we immediately have the following corollary.

\begin{cor}
    \label{cor:lower-bound-for-sigma-ratio-of-trees}
    If $T$ is a tree of order $n$, then 
    \begin{align*}
        \frac{\sigma_1(T)}{\sigma_0(T)} \ge \frac{\sigma_1(S_n)}{\sigma_0(S_n)}\,,
    \end{align*}
    with equality if and only if $T = S_n$.
\end{cor}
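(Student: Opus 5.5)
This corollary should follow directly from Theorem~\ref{thm:lower-bound-for-sigma-ratio-of-connected-graphs}, with no new computation. Every tree $T$ of order $n$ is a connected graph of order $n$. The theorem therefore gives
\[
\frac{\sigma_1(T)}{\sigma_0(T)} \ge \frac{\sigma_1(S_n)}{\sigma_0(S_n)} = \frac{n-1}{2^{n-1}+1}.
\]
I will restrict the class of graphs and check that the extremal graph still lies in the restricted class.

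For the equality case, suppose $T$ is a tree attaining equality. Since $T$ is connected, the characterisation in Theorem~\ref{thm:lower-bound-for-sigma-ratio-of-connected-graphs} forces $T = S_n$. Conversely, $S_n$ is itself a tree, since it is connected with $n-1$ edges. So the bound is attained inside the class of trees, and exactly by $S_n$.

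I do not expect any real obstacle. The only point to verify is that the minimiser $S_n$ from the larger class of connected graphs belongs to the smaller class of trees, which is immediate. This is precisely why the bound and the uniqueness statement transfer unchanged.
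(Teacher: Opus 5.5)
Your proposal is correct and is exactly the paper's argument: it restricts Theorem~\ref{thm:lower-bound-for-sigma-ratio-of-connected-graphs} to trees and observes that $S_n$ is a tree. Both the bound and the equality characterisation therefore carry over unchanged.
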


\medskip
Our next result extends Theorem~\ref{thm:lower-bound-for-sigma-ratio-of-connected-graphs} to general (possibly disconnected) graphs. Before we get to it, we need to separately study the case where the maximum degree is $1$.

\begin{prop}
\label{thm:lower-bound-for-sigma-ratio-of-general-graphs-of-max-degree-1}
    If $G$ is a graph with maximum degree $\Delta(G) =1$, then 
    \begin{align*}
        \frac{\sigma_1(G)}{\sigma_0(G)} \ge \frac{1}{3},
    \end{align*}
    with equality if and only if $G = K_2 \cup \overline{K_{n-2}}$.
\end{prop}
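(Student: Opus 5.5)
A graph with $\Delta(G)=1$ is a disjoint union of $m\ge 1$ copies of $K_2$ and $n-2m\ge 0$ isolated vertices. So I would write $G = mK_2 \cup \overline{K_{n-2m}}$ and compute $Q(G)$ explicitly using the additivity of Lemma~\ref{Lem:AVSum}.

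For a single edge, $\sigma_0(K_2)=3$ (the sets $\emptyset$ and the two singletons) and $\sigma_1(K_2)=1$ (the full vertex set), so $Q(K_2)=1/3$. For an isolated vertex, $\sigma_0(K_1)=2$ and $\sigma_1(K_1)=0$, so $Q(K_1)=0$. Applying Lemma~\ref{Lem:AVSum} repeatedly (induction on the number of components) gives
\begin{align*}
\frac{\sigma_1(G)}{\sigma_0(G)} = m\cdot\frac{1}{3} + (n-2m)\cdot 0 = \frac{m}{3}.
\end{align*}
Since $\Delta(G)=1$ forces $m\ge 1$, we get $Q(G)\ge 1/3$. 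Equality holds exactly when $m=1$, that is, when $G=K_2\cup\overline{K_{n-2}}$; conversely, this graph attains the value $1/3$.

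There is no real obstacle here. The only points needing care are the structural observation that maximum degree $1$ means every component is $K_2$ or $K_1$, and the small base values $\sigma_0,\sigma_1$ of $K_1$ and $K_2$.
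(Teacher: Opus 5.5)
Your proof is correct and follows essentially the same route as the paper: decompose $G$ as $mK_2\cup\overline{K_{n-2m}}$ and conclude $Q(G)=m/3\ge 1/3$, with equality exactly when $m=1$. The only difference is cosmetic. You apply the ratio-additivity Lemma~\ref{Lem:AVSum} directly, whereas the paper first computes $\sigma_1(G)=t\,3^{t-1}2^{n-2t}$ and $\sigma_0(G)=3^t2^{n-2t}$ via Lemmas~\ref{lem:sigma_0-and-Z_0} and~\ref{lem:sigma_1-and-Z_1}, and then divides.
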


\begin{proof}
    Let $G$ be a graph with maximum degree $\Delta(G) =1$. Then $G \cong (tK_2) \cup \overline{K_{n-2t}}$ for some integer $t\ge 1$. Thus, by Lemmas~\ref{lem:sigma_0-and-Z_0} and~\ref{lem:sigma_1-and-Z_1}, we have
    \begin{align*}
        \sigma_1(G) = \sigma_1(tK_2 \cup \overline{K_{n-2t}}) &= \sigma_1(t K_2)\sigma_0(\overline{K_{n-2t}}) + \sigma_0(tK_2) \sigma_1(\overline{K_{n-2t}})\\
        &=t(3)^{t-1}\cdot 2^{n-2t} + 0\cdot 3^t =t(3)^{t-1}\cdot 2^{n-2t}\,,
    \end{align*}
   and also
    \begin{align*}
        \sigma_0(G) = \sigma_0(tK_2 \cup \overline{K_{n-2t}}) = \sigma_0(tK_2) \sigma_0(\overline{K_{n-2t}}) = 3^t \cdot 2^{n-2t}\,.
    \end{align*}
    Hence, 
        \begin{align*}
        \frac{\sigma_1(G)}{\sigma_0(G)} = \frac{\sigma_1(tK_2 \cup \overline{K_{n-2t}})}{\sigma_0(tK_2 \cup \overline{K_{n-2t}})} = \frac{t\cdot 3^{t-1}\cdot 2^{n-2t}}{3^t \cdot 2^{n-2t}} = \frac{t}{3} \geq  \frac{1}{3}
    \end{align*}
 for all $t\ge 1$. Equality holds if and only if $t=1$, that is, $G = K_2 \cup \overline{K_{n-2}}$.
\end{proof}

\medskip
We treat empty sums as $0$. Following Theorem~\ref{thm:lower-bound-for-sigma-ratio-general-graphs}, we now obtain the $n$-vertex graph with the second-smallest value of $Q$.

\begin{thm}
\label{Thm:DiscMin} Let $n\geq 4$ and $G\neq \overline{K_n}$ be a graph of order $n$. It holds that
$$
\frac{\sigma_1(G)}{\sigma_0(G)}\geq \frac{\sigma_1(S_n)}{\sigma_0(S_n)}\,.
$$
\end{thm}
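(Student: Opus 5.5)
The plan is to reduce to connected components with Lemma~\ref{Lem:AVSum} and then use the connected-case bound, Theorem~\ref{thm:lower-bound-for-sigma-ratio-of-connected-graphs}, in its strongest form, i.e.\ with the order of the component rather than $n$.

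\textbf{Step 1 (reduction).} Write $G=G_1\cup\cdots\cup G_r$ with connected components $G_i$ of orders $n_i$. By Lemma~\ref{Lem:AVSum}, $Q(G)=\sum_i Q(G_i)$, and every term is nonnegative. Since $G\neq\overline{K_n}$, some component $G_j$ has $n_j\ge 2$. Hence $Q(G)\ge Q(G_j)$. By Theorem~\ref{thm:lower-bound-for-sigma-ratio-of-connected-graphs} applied to $G_j$,
\[
Q(G)\ \ge\ \frac{n_j-1}{2^{n_j-1}+1}=:f(n_j), \qquad 2\le n_j\le n .
\]

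\textbf{Step 2 (monotonicity).} It remains to show that $f(m)\ge f(n)$ for all $2\le m\le n$ with $n\ge4$. Compute $f(2)=1/3$, $f(3)=2/5$, $f(4)=1/3$, $f(5)=4/17$. For $m\ge 3$ the sequence is strictly decreasing. To see this, $f(m+1)<f(m)$ is equivalent to
\[
m\,(2^{m-1}+1)<(m-1)(2^m+1),
\]
that is, $2^{m-1}(m-2)>1$, which holds for $m\ge3$. So $f(m)\ge f(n)$ whenever $3\le m\le n$. The case $m=2$ needs a separate check: $f(2)=1/3=f(4)\ge f(n)$ for all $n\ge4$. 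This is precisely why the hypothesis $n\ge4$ is needed; for $n=3$ the graph $K_2\cup P_1$ has $Q=1/3<2/5=f(3)$.

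\textbf{Step 3 (conclusion).} Combining the two steps gives $Q(G)\ge f(n_j)\ge f(n)=Q(S_n)$. I expect no real obstacle. The only delicate point is the non-monotone start of $f$ ($f(2)<f(3)$), which the explicit comparison with $f(4)$ handles. Proposition~\ref{thm:lower-bound-for-sigma-ratio-of-general-graphs-of-max-degree-1} is consistent with this, since it covers the case $n_j=2$. If an equality characterisation were wanted, one would track when all other components are isolated vertices and $n_j\in\{n\}$ (or $n_j=2$ when $n=4$, giving $K_2\cup 2P_1$ as a second extremal graph). The statement as posed, however, only asks for the inequality.
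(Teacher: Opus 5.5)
Your proof is correct and follows essentially the same route as the paper: it reduces to a component with at least two vertices via Lemma~\ref{Lem:AVSum}, bounds that component by Theorem~\ref{thm:lower-bound-for-sigma-ratio-of-connected-graphs}, and concludes using that $f$ decreases from $3$ onward together with $f(2)=f(4)$. The only differences are cosmetic: you treat a $K_2$ component directly as $S_2$ rather than invoking Proposition~\ref{thm:lower-bound-for-sigma-ratio-of-general-graphs-of-max-degree-1}, and you prove monotonicity by a discrete comparison rather than with the derivative.
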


\begin{proof}
Define $f(j)=(j-1)/(2^{j-1}+1)=Q(S_{j})$. Denote by $c_2$ the number of connected components of $G$ that have $2$ vertices, and let $G_1,\dots,G_{c_3}$ be the connected components of $G$ that have more than $2$ vertices, with $n_1,\dots,n_{c_3}$ their respective orders. Note that isolated vertices do not affect the value of $Q$, meaning that $Q(H)=Q(H-w)$ if $w$ is an isolated vertex in a graph $H$.

By Lemma~\ref{Lem:AVSum} alongside Theorems~\ref{thm:lower-bound-for-sigma-ratio-of-general-graphs-of-max-degree-1} and \ref{thm:lower-bound-for-sigma-ratio-of-connected-graphs}, we have
\begin{align}
\label{Eq:Dis}
\frac{\sigma_1(G)}{\sigma_0(G)}
&=\frac{\sigma_1 (c_2K_2)}{\sigma_0(c_2K_2)}+\sum_{i=1}^{c_3}\frac{\sigma_1(G_i)}{\sigma_0(G_i)}\geq c_2\frac{\sigma_1(K_2)}{\sigma_0(K_2)}+\sum_{i=1}^{c_3}\frac{\sigma_1(S_{n_i})}{\sigma_0(S_{n_i})}
=c_2f(2)+\sum_{i=1}^{c_3}f(n_i)\,.
\end{align}
Note that
$$f(1)=0<f(2)=\frac13<f(3)=\frac{2}{5}> f(4)=\frac{3}{9}=f(2)$$ and that
 $$f'(x)=\frac{2^{x-1}+1 -\ln(2)2^{x-1}(x-1)}{(2^{x-1}+1)^2}=\frac{2^{x-1}(1-\ln(2)(x-1))+1}{(2^{x-1}+1)^2}<0 \text{ for all real }x\geq 4\,,$$
 as $1-\ln(2)(x-1) <-1$ for $x\geq 4$. So $f(i)$, restricted to integers $i$, decreases from $i=3$.

If $c_3\neq 0$, then by~\eqref{Eq:Dis}, 
 $$
  \frac{\sigma_1(G)}{\sigma_0(G)}\geq \sum_{i=1}^{c_3}f(n_i)\geq f\left(n_1\right) \geq f(n)=\frac{\sigma_1(S_n)}{\sigma_0(S_n)}\,.
 $$

Otherwise, $c_3=0$. With $G\neq \overline{K_n}$, the graph $G$ is of maximum degree $1$. By Proposition~\ref{thm:lower-bound-for-sigma-ratio-of-general-graphs-of-max-degree-1}, we have
 $$
  \frac{\sigma_1(G)}{\sigma_0(G)}\geq \frac{1}{3}=f(4)\geq f(n)=\frac{\sigma_1(S_n)}{\sigma_0(S_n)}\,,
 $$ which completes the proof of the theorem.
\end{proof}

\medskip
It is striking that the $n$-vertex graphs with smallest value of $Q$ has maximum degree $0$, while the one with the second-smallest has maximum degree $n-1$.

\medskip
To end this section, we provide a generalisation of Proposition~\ref{thm:lower-bound-for-sigma-ratio-of-general-graphs-of-max-degree-1} for arbitrary maximum degree $\Delta$.

\begin{thm}
If $G$ is a graph with maximum degree $\Delta$, then
$$
\frac{\sigma_1(G)}{\sigma_0(G)}\geq \min \left\lbrace \frac{1}{3},\frac{\sigma_1(S_{\Delta+1})}{\sigma_0(S_{\Delta+1})}\right\rbrace \,.
$$
Equality happens if and only if $G$ is  $S_{\Delta+1}\cup \overline{K_{|V(G)|-\Delta -1}}$.
\end{thm}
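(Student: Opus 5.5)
The plan is to reduce to a single connected component and then bound $Q$ for connected graphs of maximum degree $\Delta$.

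\emph{Reduction to components.} By Lemma~\ref{Lem:AVSum}, $Q$ is additive over connected components, and isolated vertices contribute $0$. Every nontrivial component $H$ satisfies $Q(H)\ge f(|V(H)|)>0$ by Theorem~\ref{thm:lower-bound-for-sigma-ratio-of-connected-graphs}, where $f(j)=Q(S_j)$ as in the proof of Theorem~\ref{Thm:DiscMin}. It therefore suffices to prove two things. First, a connected graph $H$ of maximum degree exactly $\Delta$ should satisfy $Q(H)\ge m_\Delta:=\min\{1/3,f(\Delta+1)\}$, with equality only for $H=S_{\Delta+1}$ (when the minimum is attained by the star). Second, any further nontrivial component strictly increases the sum.

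One component must have maximum degree $\Delta$, and any other nontrivial component contributes a positive amount. So $G$ cannot attain equality unless it has exactly one nontrivial component, and the equality characterisation then reduces to the connected case. For $\Delta=1$ this is exactly Proposition~\ref{thm:lower-bound-for-sigma-ratio-of-general-graphs-of-max-degree-1}.

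\emph{The connected case.} Here I would use the edge decomposition implicit in Lemma~\ref{Lem:Rec}:
\begin{align*}
Q(H)=\sum_{uw\in E(H)}\frac{\sigma_0(H-N_H[u]-N_H[w])}{\sigma_0(H)}\,.
\end{align*}
Fix a vertex $v$ of degree $\Delta$ and keep only the $\Delta$ edges $vu$, $u\in N(v)$, so that $Q(H)\ge\sum_{u\in N(v)}\sigma_0(H-N[v]-N[u])/\sigma_0(H)$. Put $X=N[v]$.

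The aim is to compare each term with $1/(2^{\Delta}+1)$, using the submultiplicativity $\sigma_0(H)\le\sigma_0(H[X])\,\sigma_0(H-X)\le(2^\Delta+1)\,\sigma_0(H-X)$. The latter inequality holds because $H[X]$ contains a spanning star, so Lemma~\ref{lem:upper-bound-for-sigma-0-of-connected-graphs} applies.

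The difficulty is that $\sigma_0(H-N[v]-N[u])$ can be much smaller than $\sigma_0(H-X)$, since $u$ may have neighbours outside $X$. Indeed, $Q(S_n)\to0$ shows that large connected graphs can have tiny $Q$. However, maximum degree $\Delta$ prevents one vertex from dominating everything. So I would instead argue by induction on $|V(H)|$.

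Choose a vertex $w$ far from $v$, for example a leaf of a BFS tree rooted at $v$, with $H-w$ still connected. Then apply Lemma~\ref{Lem:Rec} at $w$ to get
\begin{align*}
\sigma_1(H)=\sigma_1(H-w)+\sigma_1(H-N[w])+\sum_{x\in N(w)}\sigma_0(H-N[w]-N[x]),
\qquad
\sigma_0(H)=\sigma_0(H-w)+\sigma_0(H-N[w]).
\end{align*}
This exhibits $Q(H)$ as at least a weighted mediant of $Q(H-w)$ and $Q(H-N[w])$, plus a nonnegative term.

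The graph $H-w$ still has maximum degree $\Delta$ at $v$ when $w\notin N(v)$, so induction applies to it. The graph $H-N[w]$ is a possibly disconnected graph containing $v$, and its components are handled by the first reduction. A mediant of two quantities that are both $\ge m_\Delta$ is again $\ge m_\Delta$, and the extra sum is strictly positive whenever $N(w)\neq\emptyset$. This would give strict inequality for every $H\neq S_{\Delta+1}$.

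\emph{Main obstacle.} The hard step is the inductive one. If $N(w)\supseteq N(v)$, then removing $N[w]$ may destroy every vertex of degree $\Delta$. The same problem arises if $v$ is the only vertex of degree $\Delta$ and $H$ is just $S_{\Delta+1}$ plus pendant paths. In that case $H-N[w]$ has smaller maximum degree, and its $Q$ is bounded only by $m_{\Delta'}$ for some $\Delta'<\Delta$. I would handle this by choosing $w$ at distance $\ge3$ from $v$ when possible, so that $N[w]\cap N[v]=\emptyset$. The remaining case is $H$ of radius $\le2$ around $v$, and there I would compute directly, expecting the bound $m_\Delta$ to follow from monotonicity of $f$ on $j\ge3$.

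I also note that for $\Delta=2$ we have $m_2=\min\{1/3,2/5\}=1/3$, attained by $P_4$ and $2K_2$ but not by $S_3$. So the equality statement should be checked and possibly restricted to $\Delta\ne2$; for $\Delta\ge3$ the minimum is $f(\Delta+1)$.
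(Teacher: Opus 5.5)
Your reduction to a single nontrivial component is sound. You are also right that the equality clause fails for $\Delta=2$, since $Q(S_3)=2/5>1/3=m_2$. The paper's proof only shows that equality forces a star; it never checks that the star attains the minimum. Your witnesses are wrong, though: $Q(P_4)=5/8$, $Q(2K_2)=2/3$, and $\Delta(2K_2)=1$. In fact no graph of maximum degree $2$ reaches $1/3$, because the minimum there is $Q(P_3)=2/5$.

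\textbf{The gap.} The inductive step is not completed. The case where every vertex lies within distance $2$ of $v$ is deferred to ``compute directly'', and that is not an argument for an unbounded family of graphs. You have also misdiagnosed the obstacle:
\begin{itemize}
\item A smaller maximum degree in $H-N[w]$ is harmless. Since $f(2)=1/3$, $f(3)=2/5$ and $f$ decreases from $3$ on, we have $m_{\Delta'}\ge m_\Delta$ whenever $1\le\Delta'\le\Delta$, and the paper uses exactly this.
\item The only real danger is $\Delta'=0$. If the subgraph is edgeless, its $Q$ is $0$ and the mediant bound collapses.
\item Positivity of the extra sum $\sum_{x\in N(w)}\sigma_0(H-N[w]-N[x])$ does not compensate for this. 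You need a quantitative lower bound on that sum.
\end{itemize}

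\textbf{How the paper gets that bound.} It expands at the degree-$\Delta$ vertex $v$ itself, with three cases.
\begin{itemize}
\item If $G-v$ is edgeless, $G$ is the star.
\item If $G-N_G[v]$ has an edge, both pieces have maximum degree in $[1,\Delta]$, and the mediant gives strict inequality.
\item If $G-N_G[v]$ is edgeless, then $\sigma_0(G-N_G[v])=2^{n-\Delta-1}$. Since $|N_G[x]\cup N_G[v]|\le 2\Delta$, each of the $\Delta$ neighbours $x$ gives $\sigma_0(G-N_G[x]-N_G[v])\ge 2^{n-2\Delta}$. So $Q(G)$ is at least a mediant, with positive weight on the second term, of $Q(G-v)$ and $\Delta/2^{\Delta-1}>f(\Delta+1)$.
\end{itemize}
This count needs the pivot to have degree $\Delta$; a pivot of degree $d$ only yields $d/2^{\Delta-1}$.

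\textbf{Repairing your framework.} Take $w$ at distance exactly $2$ from $v$; such a $w$ exists when $H$ is connected and $n>\Delta+1$. Then $H-w$ contains the full star at $v$.
\begin{itemize}
\item If $H-N[w]$ has an edge, the monotonicity of $m$ and the mediant finish the step, with strict inequality.
\item If $H-N[w]$ is edgeless, then $v\in V(H-N[w])$ forces $N(v)\subseteq N(w)$. Hence $\deg w=\Delta$, and the paper's estimate applies with $w$ in place of $v$.
\end{itemize}
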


\begin{proof}
We proceed by induction on the order $n\geq \Delta +1$ of $G$. For $n=\Delta+1$, $G$ is connected. We apply Theorem~\ref{thm:lower-bound-for-sigma-ratio-of-connected-graphs} and obtain
$$
\frac{\sigma_1(G)}{\sigma_0(G)}\geq \frac{\sigma_1(S_{n})}{\sigma_0(S_n)}\,,
$$
as desired. Suppose that the statement holds whenever $\Delta +1\leq n <k$ for some integer $k$. Now assume that $n=k$. By Lemma~\ref{Lem:AVSum}, we can take the ratio $Q$ of one connected component of $G$ as a lower bound. Hence, we can assume that $G$ is connected.

Let $v$ be a vertex of degree $\Delta$ in $G$. If $G-v$ has no edge, then $G$ is a star and the theorem holds trivially. Otherwise, if an edge is also present in $G-N_G[v]$, then we have
\begin{align*}
\frac{\sigma_1(G)}{\sigma_0(G)}
&=\frac{\sigma_1(G-v)+\sigma_1(G-N_G[v])+ \sum_{x\in N_G(v)}\sigma_0(G-N_G[x]-N_G[v])}{\sigma_0(G-v)+\sigma_0(G-N_G[v])}\\
&>\frac{\sigma_1(G-v)+\sigma_1(G-N_G[v])}{\sigma_0(G-v)+\sigma_0(G-N_G[v])}=\rho\frac{\sigma_1(G-v)}{\sigma_0(G-v)}+(1-\rho)\frac{\sigma_1(G-N_G[v])}{\sigma_0(G-N_G[v])}
\end{align*}
with $\rho= \sigma_0(G-v) / (\sigma_0(G-v)+\sigma_0(G-N_G[v]))$. Let $\Delta_1$ an $\Delta_2$ be the maximum degrees of $G-v$ and $G-N_G[v]$, respectively. The induction hypothesis implies that
\begin{align*}
\frac{\sigma_1(G)}{\sigma_0(G)}
& > \min\left\lbrace\frac{1}{3},\frac{\sigma_1(S_{\Delta_1 +1})}{\sigma_0(S_{\Delta_1 +1})}, \frac{\sigma_1(S_{\Delta_2 +1})}{\sigma_0(S_{\Delta_2 +1})}\right\rbrace\geq \min\left\lbrace\frac{1}{3},\frac{\sigma_1(S_{\Delta+1})}{\sigma_0(S_{\Delta+1})}\right\rbrace\,,
\end{align*}
where the last inequality follows from the fact that $f(j)=(j-1)/(2^{j-1}+1)=Q(S_{j})$, restricted to integers $j$, decreases from $j=3$ (see the proof of Theorem~\ref{Thm:DiscMin}).

\medskip
Now, we assume that $G-N_G[v]$ has no edge. Since
 $$|V(G-N_G[x]-N_G[v])|\geq n- 2\Delta $$ by the choice of vertex $v$, we have
\begin{align*}
\frac{\sigma_1(G)}{\sigma_0(G)}
&=\frac{\sigma_1(G-v)+ \sum_{x\in N_G(v)}\sigma_0(G-N_G[x]-N_G[v])}{\sigma_0(G-v)+\sigma_0(G-N_G[v])}\\
&\geq \frac{\sigma_1(G-v)+\Delta 2^{n-2\Delta}}{\sigma_0(G-v)+2^{n-(\Delta +1)}}=\rho\frac{\sigma_1(G-v)}{\sigma_0(G-v)}+(1-\rho)\frac{\Delta 2^{n-2\Delta}}{2^{n-(\Delta +1)}}\\
&=\rho\frac{\sigma_1(G-v)}{\sigma_0(G-v)}+(1-\rho)\frac{\Delta}{2^{\Delta -1}}
>\rho\frac{\sigma_1(G-v)}{\sigma_0(G-v)}+(1-\rho)\frac{\Delta}{2^{\Delta}+1}
\end{align*}
with $\rho=\sigma_0(G-v) / (\sigma_0(G-v)+2^{n-(\Delta +1)} )$. Thus,
\begin{align*}
\frac{\sigma_1(G)}{\sigma_0(G)} > \rho\frac{\sigma_1(G-v)}{\sigma_0(G-v)}+(1-\rho)\frac{\sigma_1(S_{\Delta +1 })}{\sigma_0(S_{\Delta +1 })}.
\end{align*}
With $\Delta_1$ being the maximum degree in $G-v$ (so $\Delta_1\leq \Delta$), using the induction assumption, we get
\begin{align*}
\frac{\sigma_1(G)}{\sigma_0(G)}
&>\rho\frac{\sigma_1(G-v)}{\sigma_0(G-v)}+(1-\rho)\frac{\sigma_1(S_{\Delta +1 })}{\sigma_0(S_{\Delta +1 })}\geq \min\left\lbrace \frac{1}{3},\frac{\sigma_1(S_{\Delta +1 })}{\sigma_0(S_{\Delta +1 })} \right\rbrace \,.
\end{align*}
Our arguments also show that the case of equality can occur only if $G-v$ has no edge, i.e. $G$ is a star on $1+\Delta(G)$ vertices. Since isolated vertices do not affect the value of $Q$, the equality case is proved.
\end{proof}

\medskip
In the next section, we focus on finding upper bounds on $Q$.

\section{Upper bounds}
\label{Sec:UPP}

%For a graph $G$, we define 
%$$ Q(G)=\frac{\sigma_1(G)}{\sigma_0(G)}\,.$$

% \medskip
% The following result can be found in the recent preprint \cite{dossou-olory2025average}.
% \begin{prop}
% Let $G_1$ and $G_2$ be two vertex disjoint graphs. We have
% \begin{align*}
% \frac{ \sigma_1(G_1 \cup G_2)}{ \sigma_0(G_1 \cup G_2)}=\frac{ \sigma_1(G_1)}{ \sigma_0(G_1)}+\frac{ \sigma_1(G_2)}{ \sigma_0(G_2)}\,.
% \end{align*}
%Moreover, if $G=(V,E)$ has no isolated vertex and maximum degree at most $2$, then
%\begin{align*}
%\frac{\sigma_1(G)}{\sigma_0(G)}\geq \frac{1}{3}\,.
%\end{align*}
%\end{prop}
%We know that equality holds in the above proposition, see~Theorem~\ref{thm:lower-bound-for-sigma-ratio-of-general-graphs-of-max-degree-1}.

We now aim to prove the following sharp upper bound.
	\begin{thm}\label{MaxForest}
		Let $F$ be a forest of order $n\geq 1$. We have
        \begin{equation*}
			Q(F)\leq \frac{1}{3}\left(n-1\right)\,.
		\end{equation*}
The equality is reached for $F\in \{P_1,P_2\}$.
	\end{thm}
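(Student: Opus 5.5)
The plan is induction on $n$, removing a leaf and applying the recursions of Lemma~\ref{Lem:Rec} at that leaf. No reduction to trees is needed, because induced subgraphs of forests are forests. For the base cases, $Q(P_1)=0$ and $Q(P_2)=\sigma_1(K_2)/\sigma_0(K_2)=1/3$, so equality holds for $n\in\{1,2\}$, which also covers the equality statement. For $n\geq 3$: if $F$ has no edge then $Q(F)=0<(n-1)/3$. Otherwise some component of $F$ is a nontrivial tree, so $F$ has a leaf $v$ whose unique neighbour is $u$.

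Apply Lemma~\ref{Lem:Rec} at $v$. Since $N_F(v)=\{u\}$, the sum in the $\sigma_1$ recursion has the single term $\sigma_0(F-N_F[v]-N_F[u])=\sigma_0(F-N_F[u])$, because $N_F[v]\subseteq N_F[u]$. Hence
\begin{align*}
Q(F)=\frac{\sigma_1(F-v)+\sigma_1(F-\{u,v\})+\sigma_0(F-N_F[u])}{\sigma_0(F-v)+\sigma_0(F-\{u,v\})}\,.
\end{align*}
The forests $F-v$ and $F-\{u,v\}$ have orders $n-1\geq 2$ and $n-2\geq 1$. The induction hypothesis therefore gives
\begin{align*}
\sigma_1(F-v)\leq \tfrac{n-2}{3}\,\sigma_0(F-v) \quad\text{and}\quad \sigma_1(F-\{u,v\})\leq \tfrac{n-3}{3}\,\sigma_0(F-\{u,v\}).
\end{align*}
Write $A=\sigma_0(F-v)$, $B=\sigma_0(F-\{u,v\})$ and $C=\sigma_0(F-N_F[u])$. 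It then suffices to prove
\begin{align*}
\tfrac{n-2}{3}A+\tfrac{n-3}{3}B+C\leq \tfrac{n-1}{3}(A+B),
\end{align*}
which is equivalent to $C\leq \tfrac13 A+\tfrac23 B$.

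The key step is this last inequality, and it follows from monotonicity. Applying Lemma~\ref{Lem:Rec} to the vertex $u$ in $F-v$ gives $A=B+\sigma_0(F-v-N_{F-v}[u])=B+C$, since $F-v-N_{F-v}[u]=F-N_F[u]$. Hence $C\leq B$ and $A=B+C\geq B$. Therefore $C\leq B\leq \tfrac13A+\tfrac23B$, which closes the induction. The only points needing care are that the term from the $\sigma_1$ recursion is correctly identified as $\sigma_0(F-N_F[u])$, and that $n-2\geq1$, so the induction hypothesis applies to $F-\{u,v\}$ and we never need $Q$ of the empty graph. That is why the base cases are $n=1,2$.
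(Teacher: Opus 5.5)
Your proof is correct and is essentially the paper's argument in streamlined form. It uses the same leaf recursion at $v$, the same induction applied to $F-v$ and $F-N_F[v]$, and the same comparison $\sigma_0(F-N_F[u])\le\sigma_0(F-N_F[v])$ from Lemma~\ref{Lem:2}. The difference is that you skip the reduction to trees and do not need Lemma~\ref{Lem:1}; the paper in the end uses that lemma's star-based ratio bound only through $c_n\ge 1$, i.e.\ $A\ge B$.

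Two cosmetic points. First, $C\le B$ holds because $F-N_F[u]$ is an induced subgraph of $F-\{u,v\}$, not because $A=B+C$ as your ``Hence'' suggests. Second, the base case $n=2$ should also list $2P_1$, where $Q=0<\tfrac13$.
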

    
\medskip
    Before we prove the theorem, let us show two lemmas.
    
	\begin{lem}\label{Lem:1}
		If $T$ is a tree of order $n\geq 2$  and $v$ a leaf of $T$, then it holds that
		\begin{equation*}
			\frac{\sigma_0\left(T-N_T[v]\right)}{\sigma_0\left(T-v\right)}\leq 1-\frac{1}{2^{n-2}+1}\,.
		\end{equation*}
	\end{lem}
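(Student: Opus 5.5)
Let $u$ be the unique neighbour of the leaf $v$ and write $T' = T - v$, a tree of order $n-1$ containing $u$. Then $T - N_T[v] = T' - u$. By Lemma~\ref{Lem:Rec} applied to $u$ in $T'$,
\begin{align*}
\sigma_0(T') = \sigma_0(T'-u) + \sigma_0(T'-N_{T'}[u]).
\end{align*}
Hence the ratio in the statement equals
\begin{align*}
\frac{\sigma_0(T'-u)}{\sigma_0(T'-u)+\sigma_0(T'-N_{T'}[u])} = 1 - \frac{1}{1+\sigma_0(T'-u)/\sigma_0(T'-N_{T'}[u])}.
\end{align*}
So the claim reduces to proving
\begin{align*}
\frac{\sigma_0(T'-u)}{\sigma_0(T'-N_{T'}[u])} \le 2^{n-2}.
\end{align*}

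The case $n=2$ is immediate. Here $T'=P_1$, $T'-u$ is empty and $T'-N_{T'}[u]$ is empty, so the left side is $1/1 = 1 = 2^0$.

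For $n\ge 3$, let $d = \deg_{T'}(u) \ge 1$. The graph $T'-u$ has $n-2$ vertices, and $T'-N_{T'}[u]$ is obtained from it by deleting the $d$ neighbours $w_1,\dots,w_d$ of $u$. The key observation is that deleting a vertex $w$ from any graph $H$ at most halves $\sigma_0$. Indeed, by Lemma~\ref{Lem:Rec},
\begin{align*}
\sigma_0(H) = \sigma_0(H-w) + \sigma_0(H-N_H[w]) \le 2\sigma_0(H-w),
\end{align*}
because $H-N_H[w]$ is an induced subgraph of $H-w$ and $\sigma_0$ is monotone under taking induced subgraphs. Iterating this $d$ times gives
\begin{align*}
\sigma_0(T'-u) \le 2^{d}\,\sigma_0(T'-N_{T'}[u]) \le 2^{n-2}\,\sigma_0(T'-N_{T'}[u]),
\end{align*}
since $d \le n-2$. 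This is the required inequality.

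The argument is short, and the only step needing care is the reformulation through the recursion at $u$ inside $T'$ rather than at $v$. An alternative route is to bound $\sigma_0(T'-u)\le 2^{n-2}$ crudely and use $\sigma_0(T'-N_{T'}[u])\ge 1$; this also works. The tree hypothesis is not really used beyond the existence of the leaf $v$, so I expect no genuine obstacle.
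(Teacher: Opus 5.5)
Your proof is correct. Its first step is the same as the paper's: both apply the recursion at $u$ inside $T-v$ to get $\sigma_0(T-v)=\sigma_0(T-N_T[v])+\sigma_0(T-N_T[u])$.

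The finish is different. The paper bounds $\sigma_0(T-N_T[u])\geq 1$, so the ratio is at most $1-1/\sigma_0(T-v)$. It then invokes the extremal result $\sigma_0(T-v)\leq \sigma_0(S_{n-1})=2^{n-2}+1$ from Lemma~\ref{lem:upper-bound-for-sigma-0-of-connected-graphs}, which needs $T-v$ to be connected. You instead bound $\sigma_0(T-N_T[v])/\sigma_0(T-N_T[u])$ directly, by iterating $\sigma_0(H)\leq 2\sigma_0(H-w)$ over the $\deg_T(u)-1$ neighbours of $u$ other than $v$.

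This gives you three things the paper's route does not:
\begin{enumerate}
\item[(i)] The argument is self-contained and uses no extremal lemma.
\item[(ii)] As you note, it works for any graph with a pendant vertex.
\item[(iii)] It proves the sharper degree-dependent bound $1-1/(2^{\deg_T(u)-1}+1)$. This implies the stated one because $\deg_T(u)\leq n-1$, and both bounds are attained when $T$ is a star centred at $u$.
\end{enumerate}
Your ``crude'' alternative is closest to the paper, but it too shows that the star bound is unnecessary. The bounds $\sigma_0(T-N_T[v])\leq 2^{n-2}$ and $\sigma_0(T-N_T[u])\geq 1$, together with monotonicity of $x\mapsto x/(x+1)$, already suffice.

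The paper's version is slightly shorter once the cited lemma is available; yours gives more information.
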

    
	\begin{proof}
		Let $v$ be a leaf attached to $u$ in an $n$-vertex tree $T$. We have
		\begin{equation}
			\frac{\sigma_0\left(T-N_T[v]\right)}{\sigma_0\left(T-v\right)} =1-\frac{\sigma_0\left(T-v\right)-\sigma_0\left(T-N_T[v]\right)}{\sigma_0\left(T-v\right)}\,. \label{eq:01}
		\end{equation}
		Moreover, $\sigma_0(T-v)=\sigma_0\left(T-v-u\right) + \sigma_0\left(T-v-N_T[u]\right)$. %, with $u$ being the vertex adjacent to $v$ in $T$. 
        Thus
		\begin{equation*}
			\sigma_0(T-v)-\sigma_0\left(T-N_T[v]\right)=\sigma_0\left(T-v-N_T[u]\right) = \sigma_0\left(T-N_T[u]\right)\geq 1\,.
		\end{equation*}
		Identity~\eqref{eq:01} implies that $$\frac{\sigma_0\left(T-N_T[v]\right)}{\sigma_0\left(T-v\right)} \leq 1-\frac{1}{\sigma_0\left(T-v\right)}\,.$$
		Since $T-v$ is a tree,
        $$\sigma_0\left(T-v\right)\leq \sigma_0\left(S_{n-1}\right)=1+2^{n-2}\,.$$ This proves the lemma.
	\end{proof}

    \medskip
	\begin{lem}\label{Lem:2}
		If $T$ is a tree and $v$ a leaf of $T$, then it holds that
		\begin{equation*}
			Q(T)\leq \frac{\displaystyle\frac{\sigma_0\left(T-v\right)}{\sigma_0\left(T-N_T[v]\right)}Q(T-v)+1+Q\left(T-N_T[v]\right)}{1+\displaystyle\frac{\sigma_0\left(T-v\right)}{\sigma_0\left(T-N_T[v]\right)}}\,.
		\end{equation*}
	\end{lem}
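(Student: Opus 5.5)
Let $v$ be a leaf of $T$ attached to $u$. Lemma~\ref{Lem:Rec} at $v$ gives $N_T(v)=\{u\}$, and the two recursions become
\begin{align*}
\sigma_0(T)&=\sigma_0(T-v)+\sigma_0(T-N_T[v])\,,\\
\sigma_1(T)&=\sigma_1(T-v)+\sigma_1(T-N_T[v])+\sigma_0(T-N_T[v]-N_T[u])\,.
\end{align*}
Since $N_T[v]\subseteq N_T[u]$, the last term is just $\sigma_0(T-N_T[u])$.

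The key step is to bound this term by $\sigma_0(T-N_T[v])=\sigma_0(T-u-v)$. The graph $T-N_T[u]$ is an induced subgraph of $T-u-v$, obtained by deleting further vertices. Every independent set of an induced subgraph is an independent set of the larger graph, so $\sigma_0(T-N_T[u])\le\sigma_0(T-N_T[v])$. Hence
\[
\sigma_1(T)\le \sigma_1(T-v)+\sigma_1(T-N_T[v])+\sigma_0(T-N_T[v])\,.
\]

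Now divide numerator and denominator by $\sigma_0(T-N_T[v])$, which is at least $1$ because the empty set counts. Write $r=\sigma_0(T-v)/\sigma_0(T-N_T[v])$. Then $\sigma_1(T-v)/\sigma_0(T-N_T[v])=r\,Q(T-v)$ and $\sigma_1(T-N_T[v])/\sigma_0(T-N_T[v])=Q(T-N_T[v])$. This yields
\[
Q(T)\le \frac{r\,Q(T-v)+1+Q(T-N_T[v])}{1+r}\,,
\]
which is exactly the claimed inequality.

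There is no real obstacle. The only care needed is at the degenerate cases: $T-N_T[v]$ may be the empty graph (when $n=2$), and we use the conventions $\sigma_0=1$ and $\sigma_1=0$ on the null graph, so that $Q=0$ there and all denominators are positive. For $n=2$ the recursion is consistent: $\sigma_1(P_2)=0+0+1$ and $\sigma_0(P_2)=2+1$. Otherwise it is a one-line manipulation of Lemma~\ref{Lem:Rec}.
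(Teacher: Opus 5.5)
Your proof is correct and follows essentially the same route as the paper: apply Lemma~\ref{Lem:Rec} at the leaf $v$, bound $\sigma_0(T-N_T[u])$ by $\sigma_0(T-N_T[v])$ because $T-N_T[u]$ is a subgraph of $T-N_T[v]$, and then divide through by $\sigma_0(T-N_T[v])$. Your treatment of the degenerate case $n=2$ (null graph with $\sigma_0=1$, $\sigma_1=0$) makes explicit a convention that the paper leaves implicit.
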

    
	\begin{proof}
		From 
        $$\sigma_1(T)=\sigma_1\left(T-v\right)+\sigma_1\left(T-N_T[v]\right)+\sigma_0\left(T-N_T[u]\right)\,,$$
		where $u$ is the vertex adjacent to $v$ in $T$, together with
		$$\sigma_0(T)=\sigma_0\left(T-v\right)+\sigma_0\left(T-N_T[v]\right)\,,$$ we have
		\begin{equation*}
			Q(T)=\frac{\sigma_1(T)}{\sigma_0(T)}=\frac{\sigma_0(T-v)Q(T-v)+\sigma_0\left(T-N_T[v]\right)Q\left(T-N_T[v]\right)+\sigma_0\left(T-N_T[u]\right)}{\sigma_0(T-v)+\sigma_0\left(T-N_T[v]\right)}.
		\end{equation*}
		The graph $T-N_T[u]$ is a subgraph of $T-N_T[v]$, so $\sigma_0\left(T-N_T[u]\right)\leq \sigma_0\left(T-N_T[v]\right)$. This shows that
        \begin{equation*}
			Q(T) \leq \frac{\sigma_0(T-v)Q(T-v)+\sigma_0\left(T-N_T[v]\right)Q\left(T-N_T[v]\right)+\sigma_0\left(T-N_T[v]\right)}{\sigma_0(T-v)+\sigma_0\left(T-N_T[v]\right)}\,,
		\end{equation*}
        thus proving the lemma, upon dividing through by $\sigma_0\left(T-N_T[v]\right)$.	
	\end{proof}

    \medskip
    We can now prove Theorem~\ref{MaxForest}.
    
	\begin{proof} [Proof of Theorem~\ref{MaxForest}]
		We proceed by induction on the order of the forest $F$. Let $n=|F|:=|V(F)|$. If $n=1$, then $F=P_1$, $Q(F)=Q(P_1)=0$, and $0= \frac{1}{3}(1-1)$. If $n=2$, then $F=P_2$ or $F=P_1\cup P_1$. We have
			$Q(P_2)=\frac{1}{3}$, while $\frac{1}{3}\left(2-1\right)=\frac{1}{3}$. Also
			$Q(P_1\cup P_1)=0<\frac{1}{3}\left(2-1\right)$.
            
			Assume that the theorem holds for all forests with order less than or equal to $n-1$, for some $n\geq 3$. Let $F$ be a forest of order $n$. If $F$ is not connected , then $F=\cup_{i=1}^mT_i$, for some $m\geq 2$. In this case,
			\begin{equation*}
			Q(F)=\sum_{i=1}^{m}Q(T_i)\leq \sum_{i=1}^{m}\frac{1}{3}\left(|T_i|-1\right)
					=\frac{1}{3}|T|-\frac{1}{3}m < \frac{1}{3}|T|-\frac{1}{3}\,.
			\end{equation*}
			So, assume $F$ is a tree $T$. 
    %\medskip
	%We continue the proof of the theorem.
    Set $a:=\frac{1}{3}$. By Lemma~\ref{Lem:2} together with the induction hypothesis, we get
		\begin{equation*}
	\begin{aligned}
			Q(T)&\leq \frac{\displaystyle\frac{\sigma_0\left(T-v\right)}{\sigma_0\left(T-N_T[v]\right)}Q(T-v)+1+Q\left(T-N_T[v]\right)}{1+\displaystyle\frac{\sigma_0\left(T-v\right)}{\sigma_0\left(T-N_T[v]\right)}}\\
			&\leq \frac{\displaystyle\frac{\sigma_0\left(T-v\right)}{\sigma_0\left(T-N_T[v]\right)}(a(n-2))+1+a(n-3)}{1+\displaystyle\frac{\sigma_0\left(T-v\right)}{\sigma_0\left(T-N_T[v]\right)}}\,.
	\end{aligned}
	\end{equation*}
    
	Using Lemma~\ref{Lem:1}, that is $$\frac{\sigma_0\left(T-v\right)}{\sigma_0\left(T-N_T[v]\right)}\geq \frac{1}{\displaystyle 1-\frac{1}{2^{n-2}+1}}=\frac{2^{n-2}+1}{2^{n-2}}\,,$$ 
    together with the fact that the function $x \mapsto (Ax+B)/(1+x)$ is decreasing  if $A< B$, we obtain
	\begin{equation*}
		Q(T)\leq \frac{\displaystyle\frac{2^{n-2}+1}{2^{n-2}}\left(a(n-2)\right)+ 1+a(n-3)}{1+\displaystyle \frac{2^{n-2}+1}{2^{n-2}}}\,.
	\end{equation*}
	Set $c_n:=(2^{n-2}+1) / 2^{n-2}$. We have
	\begin{equation*}
		Q(T)\leq \frac{c_n\left(an-a-a\right)+an-2a-a+1}{1+c_n}.
	\end{equation*}
  We finish the proof by showing that  
$$c_n\left(an-a-a\right)+an-2a-a+1\leq \left(1+c_n\right)\left(an-a\right)\,.$$
This simplifies to 
%This is equivalent to showing that
	\begin{equation*}
		\begin{aligned}
			%& c_n(-a)+an-2a-a+1\leq an-a\,,\\
			& ac_n+2a-1\geq 0\,.
		\end{aligned}
	\end{equation*}
	Since $c_n\geq 1$ for all positive integers $n$, it suffices to show that $a(1)+2a-1 \geq 0$, that is $a\geq \frac{1}{3}$,
	which is indeed true.
	\end{proof}

\medskip
Now we improve on the upper bound, by using a different approach.

\begin{lem}\label{Lem:SimpleQ}
Let $v$ be a leaf of a tree $T$ attached to a vertex $u$. We have
\begin{align*}
  \sigma_0(T)&=2\sigma_0(T-N_T[v])+\sigma_0(T-N_T[u])\,,\\
    Q(T)&=\frac{2\sigma_0(T-N_T[v])}{\sigma_0(T)}\cdot \frac{1}{2} (Q(T-v)+Q(T-N_T[v])) + \frac{\sigma_0(T-N_T[u])}{\sigma_0(T)}(1+Q(T-v))\,.
\end{align*}
\end{lem}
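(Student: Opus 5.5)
The plan is to apply Lemma~\ref{Lem:Rec} twice: once at the leaf $v$, and once at its neighbour $u$ inside $T-v$. Then I will rewrite $\sigma_1(T)$ so that the two weights $2\sigma_0(T-N_T[v])$ and $\sigma_0(T-N_T[u])$ appear.

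For the first identity, Lemma~\ref{Lem:Rec} at $v$ gives $\sigma_0(T)=\sigma_0(T-v)+\sigma_0(T-N_T[v])$. Apply Lemma~\ref{Lem:Rec} again to $T-v$ at $u$. Since $v\notin T-v$, we have $(T-v)-u=T-N_T[v]$ and $(T-v)-N_{T-v}[u]=T-N_T[u]$. Hence
\[\sigma_0(T-v)=\sigma_0(T-N_T[v])+\sigma_0(T-N_T[u]),\]
which is exactly the step used in the proof of Lemma~\ref{Lem:1}. Substituting this gives the first identity.

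For the second identity, I start from the leaf recursion already written in the proof of Lemma~\ref{Lem:2}:
\[\sigma_1(T)=\sigma_1(T-v)+\sigma_1(T-N_T[v])+\sigma_0(T-N_T[u]).\]
Write $\sigma_1(T-v)=\sigma_0(T-v)Q(T-v)$ and split $\sigma_0(T-v)=\sigma_0(T-N_T[v])+\sigma_0(T-N_T[u])$. Also write $\sigma_1(T-N_T[v])=\sigma_0(T-N_T[v])Q(T-N_T[v])$. Grouping the terms gives
\[\sigma_1(T)=\sigma_0(T-N_T[v])\bigl(Q(T-v)+Q(T-N_T[v])\bigr)+\sigma_0(T-N_T[u])\bigl(1+Q(T-v)\bigr).\]
Dividing by $\sigma_0(T)$ and writing $\sigma_0(T-N_T[v])=2\sigma_0(T-N_T[v])\cdot\frac12$ yields the claimed formula.

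There is no real obstacle here; the only care needed is bookkeeping of the graphs. One point is the identification $T-v-N_{T-v}[u]=T-N_T[u]$. The other is the degenerate case where $T-N_T[v]$ or $T-N_T[u]$ is empty. There I take $\sigma_0(\emptyset)=1$ and $\sigma_1(\emptyset)=0$, and I use $Q$ only in the combination $\sigma_0 Q=\sigma_1$, which is valid since every $\sigma_0\ge1$.
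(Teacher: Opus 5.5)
Your proposal is correct and follows the paper's proof essentially step for step. You apply Lemma~\ref{Lem:Rec} at $v$ and then at $u$ in $T-v$ to get the first identity, and for the second you use the leaf recursion for $\sigma_1$, split $\sigma_0(T-v)=\sigma_0(T-N_T[v])+\sigma_0(T-N_T[u])$ and regroup, just as the paper does; your remarks on $(T-v)-N_{T-v}[u]=T-N_T[u]$ and on the empty-graph conventions only make explicit what the paper leaves implicit.
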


\begin{proof}
By Lemma~\ref{Lem:Rec}, we have $\sigma_0(T)=\sigma_0(T-v)+\sigma_0(T-N_T[v])$ and 
\begin{align*}
    \sigma_0(T-v)=\sigma_0((T-v)-u)+\sigma_0((T-v)-N_T[u])=\sigma_0(T-N_T[v])+\sigma_0(T-N_T[u])\,.
\end{align*}
Thus $\sigma_0(T)=2\sigma_0(T-N_T[v])+\sigma_0(T-N_T[u])$. Using Lemma~\ref{Lem:Rec} for $\sigma_1$, we obtain
\begin{align*}
    \sigma_1(T) &= \sigma_1(T-v)+ \sigma_1(T-N_T[v])+\sigma_0(T-N_T[u]) \\
    &=\sigma_0(T-v) Q(T-v) +\sigma_0(T-N_T[v])Q(T-N_T[v])+\sigma_0(T-N_T[u]) \\
      &= \big( \sigma_0(T-N_T[v])+\sigma_0(T-N_T[u]) \big) Q(T-v)\\
      & \hspace{6cm} +\sigma_0(T-N_T[v])Q(T-N_T[v])+\sigma_0(T-N_T[u]) \\
     &=\sigma_0(T-N_T[v]) \big(Q(T-v)+Q(T-N_T[v]) \big) +\sigma_0(T-N_T[u]) (1+ Q(T-v))\,.
\end{align*}
This completes the proof of the lemma.
\end{proof}

\medskip
Our final theorem reads as follows.

\begin{thm}
Let $F$ be a forest of order $n\geq 1$. We have the following:
\begin{align*}
    Q(F)\leq \frac{1}{4}n - \frac{1}{6}\,.
\end{align*}
\end{thm}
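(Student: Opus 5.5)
The plan is to argue by induction on $n$, with Lemma~\ref{Lem:SimpleQ} doing the main work. Lemma~\ref{Lem:SimpleQ} writes $Q(T)$ as a convex combination of two quantities, and the induction hypothesis bounds each of them.

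\emph{Base cases and forests.} For $n=1$ we have $Q(P_1)=0\le \frac14-\frac16$. For $n=2$ we have $Q(P_2)=\frac13=\frac24-\frac16$ and $Q(2P_1)=0$. Now let $n\ge 3$ and assume the bound for all forests of smaller order. If $F=\bigcup_{i=1}^m T_i$ with $m\ge 2$, then Lemma~\ref{Lem:AVSum} and the induction hypothesis give
\[
Q(F)=\sum_{i=1}^m Q(T_i)\le \sum_{i=1}^m\Big(\tfrac14|T_i|-\tfrac16\Big)=\tfrac14 n-\tfrac m6<\tfrac14 n-\tfrac16 .
\]
So we may assume that $F=T$ is a tree. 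Fix a leaf $v$ of $T$ and let $u$ be its neighbour.

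\emph{The tree case.} Put $\alpha=2\sigma_0(T-N_T[v])/\sigma_0(T)$ and $\beta=\sigma_0(T-N_T[u])/\sigma_0(T)$. By the first identity of Lemma~\ref{Lem:SimpleQ}, $\alpha+\beta=1$, and the second identity reads
\[
Q(T)=\alpha\cdot\tfrac12\big(Q(T-v)+Q(T-N_T[v])\big)+\beta\big(1+Q(T-v)\big).
\]
Here $T-v$ is a forest of order $n-1$ and $T-N_T[v]=T-v-u$ is a forest of order $n-2\ge 1$. The induction hypothesis applies to both and gives
\[
\tfrac12\big(Q(T-v)+Q(T-N_T[v])\big)\le \tfrac12\Big(\tfrac{n-1}{4}+\tfrac{n-2}{4}-\tfrac13\Big)=\tfrac n4-\tfrac16-\tfrac38,
\qquad
1+Q(T-v)\le \tfrac n4-\tfrac16+\tfrac34 .
\]
Therefore
\[
Q(T)\le \tfrac n4-\tfrac16+\Big(\tfrac34\beta-\tfrac38\alpha\Big),
\]
and it remains to show that $\beta\le \alpha/2$. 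This is equivalent to $\sigma_0(T-N_T[u])\le \sigma_0(T-N_T[v])$.

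\emph{The key inequality.} This comparison is the only real point of the argument. It holds because $N_T[v]=\{u,v\}\subseteq N_T[u]$, so $T-N_T[u]$ is an induced subgraph of $T-N_T[v]$, and every independent set of the former is an independent set of the latter. This is exactly the observation already used in the proof of Lemma~\ref{Lem:2}. The error term is then nonpositive and the induction closes.

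The one place needing care is that the induction hypothesis is never applied to the empty graph. This is guaranteed by treating $n\le 2$ as base cases, so that $T-N_T[v]$ has order $n-2\ge 1$ whenever the induction step is used.
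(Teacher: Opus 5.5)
Your proof is correct and matches the paper's argument: induction on $n$, additivity of $Q$ over components for the disconnected case, and in the tree case the convex-combination identity of Lemma~\ref{Lem:SimpleQ}, which reduces everything to $\sigma_0(T-N_T[u])\le\sigma_0(T-N_T[v])$ (the paper's condition $d\ge 1$). Your explicit check that $T-N_T[v]$ has order $n-2\ge 1$, so the hypothesis is never applied to the empty graph, is a small point the paper leaves implicit.
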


\begin{proof}
The theorem clearly holds for $n\leq 2$ (with equality for $n=2$). Consider a forest $F$ of order $n\geq3$. For simplicity, set $a:=1/4$ and $c:=1/6$. If $F$ is not connected, then let $T_1, T_2, \ldots, T_m$, for some integer $m\geq 2$, denote all its connected components, with orders $n_1,n_2,\ldots,n_m$, respectively. Since
$$Q(F)=Q(T_1)+Q(T_2)+\cdots + Q(T_m)\,,$$ we have
\begin{align*}
Q(F)\leq \sum_{j=1}^m (a\cdot n_j-c)= a \left(\sum_{j=1}^m n_i\right) - m \cdot c = a\cdot n - m\cdot c
\end{align*}
by the induction hypothesis. Since $m\geq 2$, 
$$ Q(F) \leq a\cdot n -2c <  a\cdot n - c\,.$$
If $F$ is connected, then we fix a leaf $v$ attached to a vertex $u$ in $F$, and invoke Lemma~\ref{Lem:SimpleQ} to obtain
\begin{align*}
    Q(F)=\rho \cdot \frac{1}{2} (Q(F-v)+Q(F-N_F[v])) + (1-\rho)(1+Q(F-v))
\end{align*}
with 
\begin{align*}
    \rho=\frac{2\sigma_0(F-N_F[v])}{2\sigma_0(F-N_F[v])+\sigma_0(F-N_F[u])}\,.
\end{align*}
The induction hypothesis yields
\begin{align*}
    Q(F)\leq & ~ \rho \cdot \frac{1}{2} \big( a(n-1)-c)  + a(n-2)-c \big) + (1-\rho)(1+ a (n-1) -c )\\
    &= \rho \big(a(n-1)-c -\frac{1}{2}a \big)  + (1-\rho)(1+ a (n-1) -c )\\
    &= a\cdot n - c + 1-a -\rho \left(1+\frac{1}{2}a\right)\,.
\end{align*}
Thus, it suffices to prove that 
$$1-a -\rho \left(1+\frac{1}{2}a \right) \leq 0\,,$$ or equivalently (using $a=1/4$),
$$ \frac{2d}{2d+1}=\rho \geq \frac{1-a}{1+a/2}=\frac{2-2a}{2+a}$$
with 
$$d=\frac{\sigma_0(F-N_F[v])}{\sigma_0(F-N_F[u])}\,.$$
In other words, 
$$d \geq \frac{1-a}{3a}=1\,,$$
which is indeed true, since $F-N_F[u]$ is a subgraph of $F-N_F[v]$. The inductive argument and thus the proof of the theorem is complete.
\end{proof}

\bigskip
%%%%%%%%%%%%%%%%%%%%%%%%BIBLIOGRAPHY%%%%%%%%%%%%%%%%%%%%%%%
%\bibliographystyle{abbrv}

%\bibliography{ratio-references}

\end{document}